\newtheorem{theorem}{{\bf Theorem}}[section]
\newtheorem{thm}{{\bf Theorem}}[section]
\newtheorem{lemma}[thm]{\bf Lemma}
\newtheorem{corollary}[thm]{\bf Corollary}
\newtheorem{definition}[thm]{\bf Definition}
\newtheorem{preproof}{{\bf Proof.}}
\newenvironment{proof}[1]{\begin{preproof}{\rm
               #1}\hfill{$\rule{2mm}{2mm}$}}{\end{preproof}}
\begin{document}
\title{\Large {\bf Degree sequence of the generalized Sierpi\'{n}ski graph}}

\author{
{ Ali Behtoei\thanks{Corresponding author, a.behtoei@sci.ikiu.ac.ir}}, { Fatemeh Attarzadeh\thanks{prs.attarzadeh@gmail.com}}, { Mahsa Khatibi\thanks{m.khatibi@edu.ikiu.ac.ir}} \\
[1mm]
{\it \small Department of Mathematics, Faculty of Science, Imam Khomeini International University,} \\
 { \it \small Qazvin, Iran, PO Box: 34148 - 96818.
}  }
\date{}
\maketitle
\vspace*{-5mm}
\begin{abstract}
The degree sequence of (ordinary) Sierpi\'nski graphs and Hanoi graphs  are determined in the literature. Also, 
In
[$\!$ J.A. Rodriguez-Vel\'azquez, E.D. Rodriguez-Bazan, A. Estrada-Moreno, On generalized Sierpi\'{n}ski graphs,  Discuss. Math. Graph T., 37 (3), 2017, 547-560.] the authors
determine the number of leaves (vertices of degree one)  of the generalized Sierpi\'nski $S(T, t)$ of any tree $T$ in terms of $t$, $|V(T)|$ and  the number of leaves of the base graph $T$. In this paper, among some other results, we generalize these results. More precisely, for every simple graph $G$ of order $n$, we completely determine the degree sequence of the generalized Sierpi\'nski graph $S(G,t)$ of $G$ in terms of $n$, $t$ and the degree sequence of $G$. Also, we determine the exact value of the general first  Zagreb index of $S(G,t)$ in terms of the same parameters of $G$. 
\end{abstract}

{\bf Key words:}  Degree sequence, Generalized Sierpi\'nski graph, General first  Zagreb index.
\\
{\bf 2010 Mathematics Subject Classification:} 05C07, 05C76.


\section{Introduction}

All graphs considered in this paper are assumed to be simple and finite. 
Throughout the paper, $G=(V,E)$ will denote a  graph of order $n=|V|$ with the vertex set $V$ and the edge set $E$.
The degree of  a vertex $v$ of $G$ is denoted by $\deg_G(v)$ which is the size of the set of  its neighbourhood $N_G(v)$. 
Decomposition into special substructures inheriting significant properties is an important method for the investigation of some mathematical structures,  
especially when the considered structures have self-similarity properties. 
In these cases we typically only need to study the substructures and the way that they are related together. 
For example polymer networks can be modeled by generalized Sierpi\'nski graphs, see \cite{GeneralRandic}.
Sierpi\'nski and Sierpi\'nski-type Graphs are studied in fractal theory \cite{Teplyaev} and appear naturally in diverse areas of mathematics and in several scientific fields.
This family of graphs were studied for the first time in \cite{FirstTime1} and  \cite{Pisanski}, independently, and
 constitutes an extensively studied class of graphs of fractal nature with applications in computer science (as a
model for interconnection networks which are known as WK-recursive networks), topology and mathematics of the Tower of Hanoi, see \cite{Della} and \cite{Romik} for more details.
One of the most important families of such graphs is formed by the Sierpi\'nski gasket graphs  introduced  by Scorer, Grundy and Smith in \cite{gasket-graphs}
which play an important role in psychology, dynamic systems and probability, see  \cite{EuropeanJ-Sierpinski gasket}, \cite{ArsComb} and \cite{Klix}.
Sierpi\'nski, Sierpi\'nski-type and generalized Sierpi\'nski graphs have many interesting properties and were studied extensively in  literature.

\begin{definition} \cite{EuroComb2011}
Let $G=(V, E)$ be a graph of order $n\geq 2$,  $t$ be a positive integer and
denote the set of words of length $t$ on the alphabet $V$  by $V^t$. The letters of a
word ${\bf u} \in V^t$ (of length $t$) are denoted by $u_1u_2...u_t$.
The generalized Sierpi\'nski graph of $G$ of dimension $t$, denoted by $S(G,t)$, is the graph with vertex set $V^t$ and $\{ {\bf u} , {\bf v} \}$ is an edge in it if and only if there exists $i \in\lbrace1,...,t\rbrace$ such that:\\
 (i)~ $u_j = v_j ~~~~ if  ~~~~  j < i$, \\
 (ii) ~$u_i \neq v_i ~~~~~and ~~~\{u_i,v_i\} \in E(G)$, \\
 (iii) ~$u_j = v_i ~~~and~~~ v_j = u_i~~ if~~ j > i$.
 \end{definition}

 For example $S(C_4,3)$ is depicted in Figure  \ref{fig:pic93} 
 in which $C_4$ is assumed to be a cycle with the vertex set $\{1,2,3,4\}$ and the edge set $\{\{1,2\},\{2,3\},\{3,4\},\{4,1\}\}$.
Note that $S(G,1)$ is (isomorphic to) the base graph $G$ and  $S(G,2)$ can be constructed by copying $n$ times $S(G,1)$ and adding an edge between the $i$-th vertex of the $j$-th copy and the $j$-th vertex of the $i$-th copy of $S(G,1)$  whenever $\{i,j\}$ is an edge in $G$.  
In fact $S(G,t)$ is a fractal-like graph that uses $G$ as a building block.
When  $G$ is the complete graph $K_n$, the (ordinary) Sierpi\'nski graph $S^t_n=S(K_n,t)$ is obtained. 
Klav$\check{z}$ar et al.  introduced  the graph $S(K_n, t)$ for the first time and they show that
 $S(K_3, t)$ is isomorphic to the graph of the Tower of Hanoi $H^t_3$, see \cite{FirstTime1} and \cite{Perfect-Codes}.
It is well known that  $S^t_n$ contains $n$  (extreme) vertices of degree $n-1$ and all the other vertices are of degree $n$, see \cite{HinzSurvey}. 
Hence, $S^t_n$ is  almost regular. 
The degree sequence of  the Hanoi graph  $H^t_n$  (the state graph of the Tower of Hanoi game with $n$ pegs and $t$ discs) is  more complex and is completely determined in \cite{Book2013}. 
In this paper, we want to  determine the degree sequence of the generalized Sierpi\'nski graph $S(G,t)$ of an arbitrary graph $G$.
Sierpi\'nski graphs are studied  from numerous points of view.
In \cite{FirstTime1} and \cite{ShortestPaths}  shortest paths in Sierpi\'nski graphs are studied.
In \cite{Automata}  an algorithm is proposed which makes use of three automata to determine all shortest paths in Sierpi\'nski graphs.
Metric properties of Sierpi\'nski graphs is investigated in \cite{Hinz} and \cite{Parisse}.
For connections between the Sierpi\'nski graphs  and Stern’s diatomic sequence see \cite{EuropeanJ2005}.
Identifying codes, locating-dominating codes and total-dominating codes in Sierpi\'nski graphs are studied in \cite{Gravier2013}.
Sierpi\'nski graphs contain (essentially) unique 1-perfect codes \cite{Perfect-Codes}.
Covering codes in Sierpi\'nski graphs is studied in \cite{Beaudou} and  equitable $L(2,1)$-labelings of them is considered in \cite{FuXie}.
In \cite{Sandi-European} the canonical isometric representation of Sierpi\'nski graphs is  explicitly described.
The crossing number of Sierpi\'nski graphs is studied in \cite{Mohar}, giving the first infinite families of graphs of fractal nature for which the crossing number is determined (up to the crossing number of complete graphs).
Colorings and the chromatic number of these graphs is studied in \cite{H-P-Coloring} and the hub number of them is determined in \cite{hub-Number}.
\begin{figure}[ht]
\centering
\includegraphics[scale=.5]{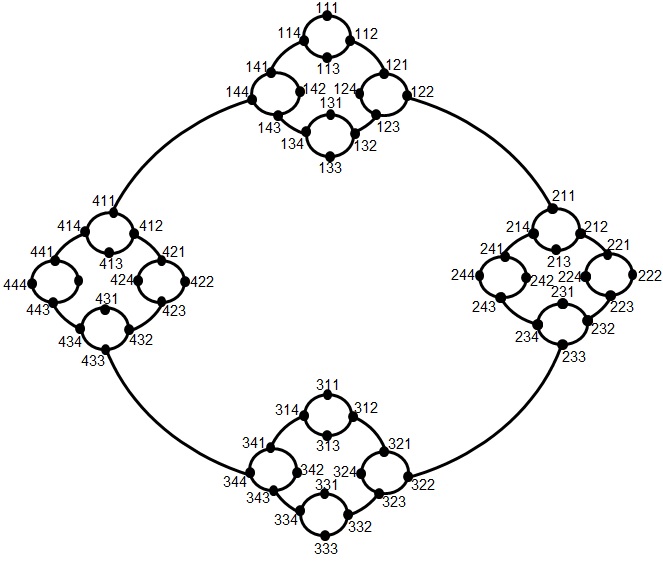}
\caption{\label{fig:pic93}
The generalized Sierpi\'nski graph $S(C_4,3)$.
}
\end{figure}
Also, many papers studied the structure of the generalized Sierpi\'nski graphs. 
In \cite{EuroComb2011}  some interesting results about the generalized Sierpi\'nski graphs (concerning their  automorphism groups, perfect codes 
and  distinguishing numbers) are obtained. 
The total chromatic number for some families of these graphs is determined by Geetha and Somasundaram in \cite{Australasian2015}. 
More precisely,  the authors prove the tight bound of the Behzad and Vizing conjecture on total coloring for the generalized Sierpi\'nski graphs
of cycle graphs and hypercube graphs. They provide a total coloring for the
WK-recursive topology, which also gives the tight bound.
In \cite{Distance2016} the distance between vertices of  $S(G, t)$ is expressed in terms of the distance between vertices of the base graph $G$. 
In addition, the authors give an explicit formula for the diameter and the radius of $S(G, t)$ when the base graph $G$ is a tree.
In \cite{2017Alberto} their  independence number, chromatic number, vertex cover number, clique number and domination number are investigated in terms of the similar parameters of the base graph $G$.
The strong metric dimension of these graphs is studied in \cite{Estaji}.
Metric properties of generalized Sierpi\'nski graphs over stars are considered in \cite{Alizadeh}.
The Roman domination number of $S (G, t )$ is investigated in \cite{Ramezani}.
An explicit formula for the number of connected components of $S(G, t)$ is given in \cite{ConnectivityZ} and it is proved that the (edge-)connectivity of $S(G, t)$ equals the (edge-) connectivity of $G$. Also, It is demonstrated that $S(G, t)$ contains a perfect matching if and only if $G$ contains a perfect matching. Moreover, Hamiltonicity of these graphs is also discussed there.
It is shown in \cite{2017Alberto}  that for any tree $T$ of order $n\geq 2$ and any positive integer $t$, $S(T, t)$ is a tree and 
the number of leaves of $S(T, t)$ is  equal to 
$${\varepsilon(T)~(n^t-2n^{t-1}+1) \over n-1}$$ 
in which $\varepsilon(T)$ is the number of leaves of $T$. We generalize this result in Theorem \ref{DegreeSequence}.
For more results in these subject and related subjects, see \cite{EuroComb2011}, \cite{HinzSurvey}, \cite{Book2013}, \cite{Farahani}, \cite{2017Alberto} and \cite{Randic2015}.

\section{Main results}

First, we determine the neighbourhood of a vertex in $S(G,t)$ and hence, its degree in $S(G,t)$. 
Then, for each $0\leq k \leq \Delta(S(G,t))$, we determine the number of vertices of degree $k$ in $S(G,t)$ which leads to the degree sequence of the generalized Sierpi\'nski graph $S(G,t)$. 
Finally, we show that  the general first  Zagreb index  of $S(G,t)$ can be expressed as a linear combination of the general first  Zagreb indices  of the base graph $G$.

\begin{lemma}  \label{VertexDegree}
Let $G=(V,E)$ be a simple graph  and $t\geq 1$ be an integer. Then,
for each vertex ${\bf x}=x_1x_2...x_t$ in the generalized Sierpi\'nski graph $S(G,t)$ we have
\begin{eqnarray*}
\deg_{S(G,t)} (x_1x_2...x_t)= 
\left\{ \begin{array}{ll}  
1+\deg_G(x_t) &  ~~  x_1x_2...x_t\neq x_tx_t...x_t ~and~ ~ x_i \in N_G(x_t)  ~ for \\ & ~ ~ i={\max\{j:~1\leq j\leq t-1,~ x_j\neq x_t\}}, \\ \\  
\deg_G(x_t) &  ~~ otherwise.   
\end{array} \right.
\end{eqnarray*}
\end{lemma}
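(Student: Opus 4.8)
The plan is to fix a vertex ${\bf x} = x_1 x_2 \dots x_t$ and enumerate its neighbours in $S(G,t)$ by sorting them according to the index $i$ appearing in the definition of an edge. The key observation is that, for an edge $\{{\bf x}, {\bf v}\}$, condition (i) forces ${\bf x}$ and ${\bf v}$ to agree on the first $i-1$ coordinates while condition (ii) forces them to differ at coordinate $i$; hence $i$ is precisely the first coordinate at which ${\bf x}$ and ${\bf v}$ differ, and it is uniquely determined by the edge. Therefore I can write $\deg_{S(G,t)}({\bf x}) = \sum_{i=1}^{t} N_i$, where $N_i$ is the number of neighbours whose first coordinate of difference with ${\bf x}$ is $i$, and compute each $N_i$ separately.

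For $i = t$ condition (iii) is vacuous, so a neighbour is any word $x_1 \dots x_{t-1} w$ with $w \in N_G(x_t)$; these are pairwise distinct and number exactly $\deg_G(x_t)$, giving $N_t = \deg_G(x_t)$. For $i < t$ I would read off conditions (i)--(iii): the coordinates $v_j$ with $j<i$ are fixed to $x_j$, those with $j > i$ are fixed to $x_i$, and by (iii) we must have $x_j = v_i$ for every $j > i$. Consequently such a neighbour can exist only when the suffix $x_{i+1} \dots x_t$ is constant, equal to some value $c$; in that case $v_i$ is forced to equal $c$, the neighbour ${\bf v} = x_1 \dots x_{i-1}\, c\, x_i \dots x_i$ is uniquely determined, and (ii) requires exactly $c \in N_G(x_i)$ (which already forces $c \neq x_i$ because $G$ is simple). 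Thus $N_i \in \{0,1\}$, and $N_i = 1$ precisely when $x_{i+1} = \dots = x_t$ and $x_i \in N_G(x_{i+1})$.

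It then remains to decide for which $i < t$ this happens. If ${\bf x} = x_t x_t \dots x_t$ is the constant word, every suffix is constant but the adjacency condition becomes $x_t \in N_G(x_t)$, which fails in a loopless graph; hence $N_i = 0$ for all $i < t$ and $\deg_{S(G,t)}({\bf x}) = \deg_G(x_t)$, matching the ``otherwise'' case. If ${\bf x}$ is not constant, set $m = \max\{ j : 1 \le j \le t-1,\ x_j \neq x_t \}$; I would check that the suffix $x_{i+1} \dots x_t$ is constant exactly when $i \ge m$ (for $i < m$ it contains both $x_m \neq x_t$ and the value $x_t$ at position $t$), and that for $m < i \le t-1$ one has $x_i = x_t$, so the adjacency condition again reduces to the impossible $x_t \in N_G(x_t)$. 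Hence the only index $i < t$ that can contribute is $i = m$, where the constant suffix value is $x_t$ and the adjacency condition reads $x_m \in N_G(x_t)$. Collecting the cases yields $\deg_{S(G,t)}({\bf x}) = 1 + \deg_G(x_t)$ when $x_m \in N_G(x_t)$ and $\deg_{S(G,t)}({\bf x}) = \deg_G(x_t)$ otherwise, which is the claimed formula. I expect the only delicate point to be the careful bookkeeping of the suffix-constancy condition together with the separate treatment of the constant word; once the neighbours are indexed by the first coordinate of difference, the rest is a direct reading of the defining conditions.
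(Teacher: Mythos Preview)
Your proof is correct and follows essentially the same approach as the paper: both classify the neighbours of ${\bf x}$ by the index $i$ at which they first differ from ${\bf x}$, observe that $i=t$ contributes exactly $\deg_G(x_t)$ neighbours, and then show that at most one index $i<t$ can contribute, namely $i=\max\{j\le t-1:\ x_j\neq x_t\}$, and that it does so precisely when $x_i\in N_G(x_t)$. Your write-up is in fact a bit more explicit than the paper's in justifying why no other $i<t$ can contribute (via the suffix-constancy analysis), but the underlying argument is the same.
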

\begin{proof}{
Obviously the result follows for $t=1$ because $x_1...x_t=x_t...x_t=x_t$ and  $\deg_{S(G,1)}(x_1)=\deg_G(x_1)$. Hence, we assume that $t\geq 2$.
By  the adjacency rule in $S(G,t)$, it is straightforward to see that each vertex ${\bf x'}=x'_1x'_2...x'_t$  in $S(G,t)$ with $N_G(x'_t)=\emptyset$  is an isolated vertex in $S(G,t)$. Thus, If  $\deg_G(x_t)=0$, then ${\bf x}=x_1x_2...x_t$ is an isolated vertex in $S(G,t)$ and the result directly follows.
Assume that $\deg_G(x_t)=d\geq 1$ and $N_G(x_t)=\{y_1,y_2,...,y_d\}$.  
For each $i\in \{1,2,...,d\}$, it is easy to check that two vertices ${\bf x}=x_1x_2...x_{t-1}x_t$ and ${\bf y_i}=x_1x_2...x_{t-1}y_i$ are adjacent in $S(G,t)$. 
Hence,  $\deg_{S(G,t)}({\bf x}) \geq d= \deg_G(x_t)$. 
Now let ${\bf z}=z_1z_2...z_t $ be a neighbour of ${\bf x}$ in $S(G,t)$.
Thus,  by the adjacency rule in $S(G,t)$, there exists $i\in \{1,2,...,t\}$ such that $x_j=z_j$ for each $j<i$, $\{x_i,z_i\}\in E(G)$ and ($x_\ell =z_i$ and $z_\ell=x_i$) for each $\ell >i$. 
If $i=t$, then ${\bf z}=x_1x_2...x_{t-1}z_t$ and $\{x_t,z_t\} \in E(G)$. 
This implies that  $z_t \in N_G(x_t)=\{y_1,y_2,...,y_d\}$ and hence ${\bf z} \in \{{\bf y_1},{\bf y_2},...,{\bf y_d}  \}$.
If $i<t$, then we must have
$${\bf x}=x_1x_2...x_{i-1}x_iz_iz_i...z_i  ~~~,~~~ {\bf z}=x_1x_2...x_{i-1}z_ix_ix_i...x_i $$
Thus, $x_t=z_i$ and $x_i\in N_G(z_i)=N_G(x_t)=\{y_1,y_2,...,y_d\}$. 
This implies that $$ {\bf x}=x_1x_2...x_{i-1}x_ix_tx_t...x_t  ~~~,~~~ {\bf z}=x_1x_2...x_{i-1}x_tx_ix_i...x_i  $$
Note  that  $i=\max\{j:~1\leq j\leq t-1,~ x_j\neq x_t\}$.  
Since the vertex ${\bf x}$ is given and its structure is specified, the index $i$ is unique.   
Therefore, we have
$$N_{S(G,t)} (x_1x_2...x_t)\! = \!
\left\{ \begin{array}{ll}  
\!\!\! \{{\bf y_1},{\bf y_2},...,{\bf y_d}, x_1x_2...x_{i-1}x_tx_ix_i...x_i\} &  ~~  x_1x_2...x_t\neq x_tx_t...x_t , ~ x_i \in N_G(x_t) , \\  \!\!\! \{{\bf y_1},{\bf y_2},...,{\bf y_d} \} & ~~ otherwise,  
\end{array} \right.$$
in which $i={\max\{j:~1\leq j\leq t-1,~ x_j\neq x_t\}}$. This completes the proof.
}\end{proof}

Let $G$ be a graph  of order $n\geq 2$ with $E(G)\neq \emptyset$ such that $\delta(G)=\deg_G(x)$, $\Delta(G)=\deg_G(y)$ and $z\in N_G(y)$. 
Lemma \ref{VertexDegree} implies that $\deg_{S(G,t)}(xx...x)=\deg_G(x)=\delta(G)$ and for $t\geq 2$ we have $\deg_{S(G,t)}(zz...zy)=1+\deg_G(y)=1+\Delta(G)$.
Hence, the following  corollary directly follows from  Lemma \ref{VertexDegree}, (also, see \cite{2017Alberto}).

\begin{corollary} \label{MaxDeg}
Let $G$ be a simple   graph of order $n\geq 2$  and  $t\geq 1$ be an integer. Then, 
\begin{itemize}
\item[i)] $\delta(S(G,t))= \delta(G)$. 
\item[ii)] $\Delta(S(\overline{K_n},t)) = \Delta(\overline{K_n})=0$ and $\Delta(S(G,1)) = \Delta(G)$. 
Also, we have $\Delta(S(G,t)) = 1+\Delta(G)$ when $E(G)\neq \emptyset$ and  $t\geq 2$,
\end{itemize}
\end{corollary}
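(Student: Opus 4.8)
The plan is to treat both statements as an immediate consequence of Lemma~\ref{VertexDegree}, using a uniform ``sandwich'' strategy: first read off from the lemma a bound that holds for \emph{every} vertex of $S(G,t)$, and then exhibit a single vertex that attains the claimed extreme value. Lemma~\ref{VertexDegree} tells us that for each word ${\bf x}=x_1x_2\ldots x_t$ the degree $\deg_{S(G,t)}({\bf x})$ equals either $\deg_G(x_t)$ or $1+\deg_G(x_t)$, so every vertex degree lies between $\delta(G)$ and $1+\Delta(G)$. This two-sided estimate is the backbone of both parts.

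For part (i), I would first observe that since $\deg_{S(G,t)}({\bf x})\ge \deg_G(x_t)\ge \delta(G)$ for every ${\bf x}$, we have $\delta(S(G,t))\ge \delta(G)$. To obtain equality I would exhibit a vertex realising $\delta(G)$: pick $x\in V(G)$ with $\deg_G(x)=\delta(G)$ and consider the constant word $xx\ldots x$. Because this word coincides with $x_t x_t\ldots x_t$, it falls into the ``otherwise'' branch of Lemma~\ref{VertexDegree}, so its degree is exactly $\deg_G(x)=\delta(G)$; hence $\delta(S(G,t))\le\delta(G)$ and the two inequalities close the argument.

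For part (ii), the two degenerate cases are immediate: when $G=\overline{K_n}$ every vertex has empty neighbourhood, so (as noted inside the proof of Lemma~\ref{VertexDegree}) every vertex of $S(\overline{K_n},t)$ is isolated and $\Delta=0$; and $S(G,1)\cong G$ gives $\Delta(S(G,1))=\Delta(G)$. For the remaining case $E(G)\neq\emptyset$, $t\ge 2$, the sandwich already yields $\Delta(S(G,t))\le 1+\Delta(G)$, so it remains to produce a vertex of degree exactly $1+\Delta(G)$. Choose $y\in V(G)$ with $\deg_G(y)=\Delta(G)\ge1$ and a neighbour $z\in N_G(y)$, and take the word ${\bf x}=zz\ldots zy$ consisting of $t-1$ copies of $z$ followed by $y$.

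The one point that needs genuine checking --- and the only place the combinatorics of the definition really intervenes --- is that this witness lands in the first (degree $1+\deg_G(x_t)$) branch of Lemma~\ref{VertexDegree}. Here $x_t=y\ne z=x_j$ for all $j\le t-1$, so the word is non-constant and the index $i=\max\{j:1\le j\le t-1,\ x_j\ne x_t\}$ equals $t-1$; the required condition $x_i\in N_G(x_t)$ then reads $z\in N_G(y)$, which holds by the choice of $z$. Lemma~\ref{VertexDegree} therefore gives $\deg_{S(G,t)}({\bf x})=1+\deg_G(y)=1+\Delta(G)$, so $\Delta(S(G,t))\ge 1+\Delta(G)$ and the proof is complete. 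I expect no real obstacle beyond this branch-verification, since everything else is a direct translation of the lemma.
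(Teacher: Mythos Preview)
Your proof is correct and follows essentially the same route as the paper: both use Lemma~\ref{VertexDegree} to bound every degree between $\delta(G)$ and $1+\Delta(G)$, then exhibit the constant word $xx\ldots x$ (with $\deg_G(x)=\delta(G)$) and the word $zz\ldots zy$ (with $\deg_G(y)=\Delta(G)$ and $z\in N_G(y)$) as the extremal witnesses. You are simply more explicit than the paper in verifying that $zz\ldots zy$ lands in the first branch of the lemma.
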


\begin{theorem} \label{DegreeSequence}
Let $G=(V,E)$ be a simple graph of order $n\geq 2$,  $t\geq 1$ be an integer and for each $k$, $\delta(G) \leq k \leq \Delta(G)$, let $V_k=\{v\in V:~\deg_G(v)=k\}$. 
Then, for each $k$, $\delta(S(G,t)) \leq k \leq \Delta(S(G,t))$, the number of vertices of degree $k$ in the generalized Sierpi\'nski graph $S(G,t)$   is 
$$ |V_k| ~  n^{t-1} -  {n^{t-1} -1 \over n-1} ~ \big(k~|V_k| - (k-1)~ |V_{k-1}| \big). $$
\end{theorem}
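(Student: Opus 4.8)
The plan is to leverage Lemma \ref{VertexDegree}, which tells us that every vertex $\mathbf{x}=x_1\cdots x_t$ has degree either $\deg_G(x_t)$ or $1+\deg_G(x_t)$, according to whether it falls into the ``otherwise'' case or the exceptional (``plus one'') case. Consequently a vertex attains degree $k$ in exactly two mutually exclusive ways: either its last letter lies in $V_k$ and it sits in the ``otherwise'' case, or its last letter lies in $V_{k-1}$ and it sits in the exceptional case. Thus the whole count reduces to determining, for a fixed last letter $v$, how many of the $n^{t-1}$ words ending in $v$ belong to the exceptional case.

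First I would fix a vertex $v$ of $G$ with $\deg_G(v)=m$ and count the words $x_1\cdots x_{t-1}v$ that fall into the exceptional case. Such a word is not the constant word $v\cdots v$, so the index $i=\max\{j\le t-1:\ x_j\ne v\}$ is well defined, the suffix $x_{i+1}=\cdots=x_{t-1}=v$ is forced, and the exceptional condition from Lemma \ref{VertexDegree} is precisely $x_i\in N_G(v)$. Stratifying by the value of $i$, for each fixed $i\in\{1,\dots,t-1\}$ there are $n^{i-1}$ free choices for the prefix $x_1\cdots x_{i-1}$ and exactly $m=|N_G(v)|$ admissible choices for $x_i$ (all neighbours of $v$ differ from $v$ in a simple graph), which gives $m\,n^{i-1}$ words. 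Summing this geometric series over $i$ yields
$$\sum_{i=1}^{t-1} m\,n^{i-1} = m\,\frac{n^{t-1}-1}{n-1}$$
words ending in $v$ in the exceptional case, and hence $n^{t-1}-m\,\frac{n^{t-1}-1}{n-1}$ words ending in $v$ in the ``otherwise'' case.

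Then I would assemble the final count. The vertices of degree $k$ whose last letter lies in $V_k$ (the ``otherwise'' contribution) number $|V_k|\big(n^{t-1}-k\,\tfrac{n^{t-1}-1}{n-1}\big)$, while those whose last letter lies in $V_{k-1}$ (the exceptional contribution) number $|V_{k-1}|\,(k-1)\,\tfrac{n^{t-1}-1}{n-1}$. Adding the two contributions and factoring out $\tfrac{n^{t-1}-1}{n-1}$ gives exactly the claimed expression $|V_k|\,n^{t-1}-\tfrac{n^{t-1}-1}{n-1}\big(k\,|V_k|-(k-1)\,|V_{k-1}|\big)$.

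The computations are elementary, so the only real care needed is in the bookkeeping. The main point to verify is that stratifying by $i$ gives an exact partition of the exceptional words ending in $v$, in particular that the forced suffix and the neighbour condition $x_i\in N_G(v)$ are captured correctly and that the constant word is correctly excluded. One should also check the boundary conventions, so that the formula stays valid when $V_k$ or $V_{k-1}$ is empty, and in the degenerate case $t=1$, where $\tfrac{n^{t-1}-1}{n-1}=0$ reduces the count to $|V_k|$, in agreement with $S(G,1)\cong G$.
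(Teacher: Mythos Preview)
Your argument is correct and follows essentially the same route as the paper: both proofs invoke Lemma \ref{VertexDegree} to reduce to the dichotomy ``degree $\deg_G(x_t)$ versus $1+\deg_G(x_t)$'' for words with a fixed last letter, and both stratify the words $x_1\cdots x_{t-1}v$ by the index $i=\max\{j\le t-1:\ x_j\ne v\}$ to obtain the same geometric sum. The only cosmetic difference is that the paper counts the ``otherwise'' words directly (choosing $x_i\notin N_G(v)\cup\{v\}$, giving $n-s-1$ options), whereas you count the complementary exceptional words (choosing $x_i\in N_G(v)$, giving $m$ options); the two counts are of course equivalent.
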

\begin{proof}{
It is easy to check that the result directly follows for the case $t=1$. Hereafter we assume that $t\geq 2$.
For each integer $s$, $\delta(G) \leq s \leq \Delta(G)$, and for each $x \in V_s$ define 
$$\Omega_{s,x}=\{ v_1v_2...v_t \in V(S(G,t)) :~v_t=x  \}$$
and let $\Omega_s=\cup_{x\in V_s} \Omega_{s,x}$. 
Note that $|\Omega_{s,x}|=n^{t-1}$ and $|\Omega_s|= |V_s|~n^{t-1}$.
By Lemma \ref{VertexDegree}, the degree of each vertex in $\Omega_s$ is  $s$ or $s+1$.
Now we want to determine the number of vertices in $\Omega_s$ which has degree $s$. 
Let $x\in V_s$ and $v_1v_2...v_{t-1}x \in \Omega_s$. 
If $\deg_{S(G,t)} (v_1v_2...v_{t-1}x)=s$, then by Lemma \ref{VertexDegree} we have $v_1v_2...v_{t-1}x=xx...x$ or
there exists $1\leq i \leq t-1$ such that $i={\max\{j:~1\leq j\leq t-1,~ v_j\neq x \}}$ and $v_i \notin N_G(x)$. 
For each $i$, $1\leq i \leq t-1$, define
$$\Gamma_{s,x,i}=\{x_1x_2...x_t \in V(S(G,t)):~ x_i\notin N_G(x)\cup \{x\},~ x_j=x ~~\forall j>i  \}.$$
Note that $|\Gamma_{s,x,i}|=n^{i-1}~(n-s-1)$. 
Thus, the degree of the vertex $v_1v_2...v_{t-1}x \in \Omega_s$ is equal to $s$ if and only if 
$$v_1v_2...v_{t-1}x \in \left(  \cup_{i=1}^{t-1} ~ \Gamma_{s,x,i} \right) \cup \{xx...x\}.$$
Since $x\in V_s$ and
$$\big{|} \left(  \cup_{i=1}^{t-1} ~ \Gamma_{s,x,i} \right) \cup \{xx...x\} \big{|} = 1+ (n-s-1) ~ (1+n+n^2+\cdots +n^{t-2}),$$
the number of vertices in $\Omega_s$ of degree $s$ is 
$$|V_s|~ \big{(} 1+ (n-s-1) ~ (1+n+n^2+\cdots +n^{t-2}) \big{)} = |V_s|~ \big{(} n^{t-1} - s ~ (1+n+\cdots +n^{t-2}) \big{)}.$$
Hence, the number of vertices in $\Omega_s$ of degree $s+1$ is given by
\begin{eqnarray*}
|\Omega_s| - |V_s|~ \big{(} n^{t-1} - s ~ (1+n+\cdots +n^{t-2}) \big{)} &=& |V_s|~n^{t-1} - |V_s|~ \big{(} n^{t-1} - s ~ (1+n+\cdots +n^{t-2}) \big{)} \\
&=&  s~|V_s|~ (1+n+\cdots +n^{t-2}).
\end{eqnarray*}
Therefore, the number of vertices of degree $k$ in the generalized Sierpi\'nski graph $S(G,t)$ is equal to
$$ |V_k|~ \big{(} n^{t-1} - k ~ (1+n+\cdots +n^{t-2}) \big{)}  +  (k-1)~|V_{k-1}|~ (1+n+\cdots +n^{t-2}) $$
Since $V_{_{1+\Delta(G)}}=\emptyset=V_{_{-1}}$ and $1+n+\cdots +n^{t-2} = {n^{t-1}-1 \over n-1}$, the proof is completed.
}\end{proof}

A large number of properties like chemical activity, biological activity, physicochemical properties, thermodynamic properties are determined by the  chemical applications
 of graph theory. These properties can be expressed by certain graph invariants (real numbers related to a graph which is structurally invariant) referred to as topological indices. 
Some of topological indices of Sierpi\'nski networks and generalized Sierpi\'nski graphs are determined, see  \cite{GeneralRandic}, \cite{Farahani} and \cite{Randic2015}.
In \cite{LiZhao} and \cite{M1k}  Li et al. considered the general first  Zagreb index of a graph $G$  as
$$Z_\alpha(G)=\sum_{\{u,v\}\in E(G)} \left((\deg_G(u))^{\alpha -1}+(\deg_G(v))^{\alpha -1}\right)=\sum_{u\in V(G)} (\deg_G(u))^\alpha$$
in which $\alpha$ is a real number. 
Specially, we see that $Z_0(G)=n$, $Z_1(G)=\sum_{k=1}^{\Delta(G)} ~|V_k|~k=2|E(G)|$, $Z_2(G)=M_1(G)$ which is known as  the first Zagreb index and $Z_3(G)=F(G)$ which is known as the forgotten topological index, see \cite{Reviewer1-4}, \cite{StarSequence} and \cite{AAA2} for more details.

\begin{corollary}  \label{GeneralFirstZagreb}
For each simple graph $G$ of order $n\geq 2$ and each integer $\alpha\geq 0$,  the general first Zagreb index of the generalized Sierpi\'nski graph $S(G,t)$, $t\geq 1$,  is given by
$$ Z_\alpha(S(G,t)) = {n^t - n^{t-1} + (n^{t-1}-1)\alpha \over n-1} ~Z_\alpha(G) + {n^{t-1} -1 \over n-1} \sum_{j=1}^{\alpha - 1} {\alpha \choose j-1} ~ Z_j(G).$$
\end{corollary}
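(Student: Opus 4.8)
The plan is to compute $Z_\alpha(S(G,t))$ directly from its vertex definition $Z_\alpha(H)=\sum_{u\in V(H)}(\deg_H(u))^\alpha$, feeding in the degree-$k$ counts supplied by Theorem~\ref{DegreeSequence}. Let $N_k$ denote the number of vertices of degree $k$ in $S(G,t)$ and abbreviate $c=1+n+\cdots+n^{t-2}=\tfrac{n^{t-1}-1}{n-1}$, so that Theorem~\ref{DegreeSequence} reads $N_k=|V_k|\,n^{t-1}-c\,(k|V_k|-(k-1)|V_{k-1}|)$. Then I would write
\[
Z_\alpha(S(G,t))=\sum_{k}N_k\,k^\alpha=n^{t-1}\sum_k|V_k|k^\alpha-c\sum_k k^{\alpha+1}|V_k|+c\sum_k k^\alpha(k-1)|V_{k-1}|,
\]
where $k$ ranges over the possible degrees (recall $V_{-1}=\emptyset=V_{1+\Delta(G)}$, so the index shifts introduce no spurious terms).

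Two of these three sums are immediate from the identity $\sum_k|V_k|k^j=Z_j(G)$: the first equals $Z_\alpha(G)$ and the second equals $Z_{\alpha+1}(G)$. The only genuine computation is the third sum. Reindexing $m=k-1$ turns it into $\sum_m m\,(m+1)^\alpha|V_m|$; expanding $(m+1)^\alpha=\sum_{i=0}^{\alpha}\binom{\alpha}{i}m^i$ by the binomial theorem (here we use that $\alpha\ge 0$ is an integer) and interchanging the order of summation gives $\sum_{i=0}^{\alpha}\binom{\alpha}{i}\sum_m m^{i+1}|V_m|=\sum_{j=1}^{\alpha+1}\binom{\alpha}{j-1}Z_j(G)$ after setting $j=i+1$. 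This binomial step is the heart of the argument and the one place requiring care.

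Collecting the pieces yields
\[
Z_\alpha(S(G,t))=n^{t-1}Z_\alpha(G)-c\,Z_{\alpha+1}(G)+c\sum_{j=1}^{\alpha+1}\binom{\alpha}{j-1}Z_j(G).
\]
Now the top term $j=\alpha+1$ of the sum contributes $c\binom{\alpha}{\alpha}Z_{\alpha+1}(G)=c\,Z_{\alpha+1}(G)$, which cancels the isolated $-c\,Z_{\alpha+1}(G)$ exactly; this cancellation is why the final formula involves only $Z_j(G)$ for $j\le\alpha$. Splitting off the next term $j=\alpha$, for which $\binom{\alpha}{\alpha-1}=\alpha$, the coefficient of $Z_\alpha(G)$ becomes $n^{t-1}+c\alpha$, and rewriting $n^{t-1}=\tfrac{n^t-n^{t-1}}{n-1}$ recasts it as $\tfrac{n^t-n^{t-1}+(n^{t-1}-1)\alpha}{n-1}$, exactly the stated leading coefficient. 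What survives from the sum is $c\sum_{j=1}^{\alpha-1}\binom{\alpha}{j-1}Z_j(G)$, matching the second term of the claimed identity.

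The main obstacle is organizational rather than conceptual: keeping the index shifts and the binomial reindexing straight, and checking the boundary behaviour. I would finish by verifying the degenerate cases, namely $\alpha=0$ (empty sum, giving $Z_0(S(G,t))=n^t$) and $\alpha=1$ (empty sum as well), and noting that vertices of degree $0$ in $G$ cause no trouble, since they contribute $0^\alpha$ and are absorbed correctly by the same computation.
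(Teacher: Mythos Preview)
Your proof is correct and follows essentially the same route as the paper: apply Theorem~\ref{DegreeSequence}, split $\sum_k N_k k^\alpha$ into three sums, reindex the third via $m=k-1$, expand $(m+1)^\alpha$ binomially, and then cancel the $Z_{\alpha+1}(G)$ term and extract the $j=\alpha$ coefficient. Your write-up is in fact slightly more explicit than the paper's about the final cancellation and the degenerate cases $\alpha=0,1$.
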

\begin{proof}{
By the definition of the general first Zagreb index, binomial expansion formula and  Theorem \ref{DegreeSequence}, we have
\begin{eqnarray*}
Z_\alpha(S(G,t))&=& \sum _{k=1}^{\Delta(G)+1}  \left( |V_k| ~  n^{t-1} -  {n^{t-1} -1 \over n-1} ~ \big(k~|V_k| - (k-1)~ |V_{k-1}| \big)  \right)k^\alpha  \\
&=&  \sum _{k=1}^{\Delta(G)+1}  \left( |V_k| ~  n^{t-1}~k^\alpha -  {n^{t-1} -1 \over n-1} ~ \big(k^{\alpha +1}~|V_k| - k^\alpha (k-1)~ |V_{k-1}| \big)  \right)  \\
&=& n^{t-1} ~ Z_{\alpha}(G) - {n^{t-1} -1 \over n-1} ~ Z_{\alpha +1}(G) + {n^{t-1} -1 \over n-1} \sum _{k=1}^{\Delta(G)+1} 
\left( ~k^\alpha (k-1)~ |V_{k-1}|~ \right) \\
&=& n^{t-1} ~ Z_{\alpha}(G) - {n^{t-1} -1 \over n-1} ~ Z_{\alpha +1}(G) +{n^{t-1} -1 \over n-1} \sum _{\ell =1}^{\Delta(G)} 
\left( ~(\ell +1 )^\alpha \ell ~ |V_\ell |~ \right) \\ 
&=& n^{t-1} ~ Z_{\alpha}(G) - {n^{t-1} -1 \over n-1} ~ Z_{\alpha +1}(G) +{n^{t-1} -1 \over n-1} \sum _{\ell =1}^{\Delta(G)} 
\left( \bigg{(}  \sum_{i=0}^\alpha {\alpha \choose i} \ell^i  \bigg{)} ~ \ell ~ |V_\ell | \right) \\ 
&=& n^{t-1} ~ Z_{\alpha}(G) - {n^{t-1} -1 \over n-1} ~ Z_{\alpha +1}(G) +{n^{t-1} -1 \over n-1} 
\sum_{i=0}^\alpha {\alpha \choose i} ~ Z_{i+1}(G) \\
&=&  {n^t - n^{t-1} + (n^{t-1}-1)\alpha \over n-1} ~Z_\alpha(G) + {n^{t-1} -1 \over n-1} \sum_{j=1}^{\alpha - 1} {\alpha \choose j-1} ~ Z_j(G).
\end{eqnarray*}
}\end{proof}
Note that  by using the generalized form of the binomial theorem, Corollary \ref{GeneralFirstZagreb} can be motivated 
in such a way that  $\alpha$ be a real number  but then the finite sum should be replaced by an infinite series.
By using Corollary \ref{GeneralFirstZagreb}, it can be easily seen that
$$|V(S(G,t))|=Z_0(S(G,t))=n^t,~~|E(S(G,t))|={1\over 2} Z_1(S(G,t))={n^t-1 \over n-1}~\! |E(G)|$$
which coincides with the previously obtained results, see \cite{2017Alberto}. Also,  we have
$$M_1(S(G,t))= Z_2(S(G,t))={n^t+n^{t-1}-2 \over n-1} Z_2(G) + {n^{t-1}-1\over n-1} 2|E(G)|$$
and 
$$F(S(G,t))= Z_3(S(G,t))={n^t+2n^{t-1}-3\over n-1} F(G) + {n^{t-1}-1\over n-1}(2|E(G)|+3M_1(G)).$$
In \cite{Behtoei},  by using the Stirling numbers of the first kind, it is shown that  for each integer $\alpha\geq \Delta(G)$, the general first Zagreb index $Z_\alpha(G)$ can be expressed as a linear combination of $Z_0(G)$, $Z_1(G)$, ..., $Z_{\Delta(G)-1}(G)$. This result using Corollary \ref{GeneralFirstZagreb} implies that 
$Z_\alpha(S(G,t))$ can also be expressed as a linear combination of $Z_0(G)$, $Z_1(G)$, ..., $Z_{\Delta(G)-1}(G)$ for each $\alpha\geq \Delta(G)$. 
\\ \\ 
The authors declare that they have no competing interests.



\end{document}